\newcommand{\e}{\epsilon}
\newcommand{\eps}{\varepsilon}
\newcommand{\be}{\begin{equation}}
\newcommand{\ba}{\begin{aligned}}
\newcommand{\bee}{\begin{equation*}}
\newcommand{\ee}{\end{equation}}
\newcommand{\ea}{\end{aligned}}
\newcommand{\eee}{\end{equation*}}
\newcommand{\bea}{\begin{equation} \begin{aligned} }
\newcommand{\eea}{\end{aligned}\end{equation} }
\theoremstyle{plain}
\newtheorem{theorem}{Theorem}[section]
\newtheorem{prop}[theorem]{Proposition}
\theoremstyle{remark}
\theoremstyle{definition}
\numberwithin{equation}{section}
\begin{document}
\title{A note on blowup limits in 3d Ricci flow}
\author{Beomjun Choi, Robert Haslhofer}

\begin{abstract}
We prove that Perelman's ancient ovals occur as blowup limit in 3d Ricci flow through singularities if and only if there is an accumulation of spherical singularities.
\end{abstract}
\maketitle

\section{Introduction}

In \cite[Section 1.4]{Per2}, Perelman constructed ancient ovals for the Ricci flow. These are ancient 3-dimensional Ricci flows that for $t\to 0$ converge to a round point, but for $t\to -\infty$ look like a long neck capped off by two Bryant solitons. Uniqueness of Perelman's ancient ovals has been proved in recent important work by Brendle-Daskalopoulos-Sesum \cite{BDS}. While 3-dimensional Ricci flow is by now extremely well understood, one of the few remaining open problems is whether or not Perelman's ancient ovals actually occur as blowup limit in 3d Ricci flow through singularities. In this short note, we prove that Perelman's ancient ovals occur as blowup limit  if and only if there is an accumulation of spherical singularities.\\

For the purpose of this note it is most convenient to describe 3-dimensional Ricci flow through singularities as metric flows, as introduced by Bamler \cite{Bam2}.\footnote{{In particular, as sketched in \cite[Section 3.7]{Bam2}, after selecting a branch and passing to the metric completion, any 3-dimensional singular Ricci flow from Kleiner-Lott \cite{KL} can be viewed as a metric flow.}}
{A metric flow
is given by a set $\mathcal{X}$,} a time-function $\mathfrak{t}:\mathcal{X}\to \mathbb{R}$, complete separable metrics $d_t$ on the time-slices $\mathcal{X}_t=\mathfrak{t}^{-1}(t)$, and probability measures $\nu_{x;s}\in \mathcal{P}(\mathcal{X}_s)$ such that the Kolmogorov consistency condition and a certain sharp gradient estimate for the heat flow hold (see Section \ref{sec_prelim} for details).
We remark that by our recent work \cite{CH}, any metric flow $\mathcal{X}$ {arising as a noncollapsed limit of smooth Ricci flows or arising from a 3-dimensional singular Ricci flow from \cite{KL}} is a weak solution of the Ricci flow in the sense of Haslhofer-Naber \cite{HN}, namely for almost every $(p,t)$ the estimate
$|\nabla_p\mathbb{E}_{(p,t)}[F]| \leq \mathbb{E}_{(p,t)}[|\nabla^\parallel F|]$
holds for all cylinder functions $F$ on path-space. However, for our present purpose it is enough to assume that the Ricci flow equation
$
 \mathcal{L}_{\partial_{\mathfrak{t}}} g = -2 \textrm{Ric}(g)
$ holds on the regular part.\\
 
Next, we recall from \cite[Section 6.8]{Bam2} that a tangent flow at some space-time point $x\in \mathcal{X}$ is an $\mathbb{F}$-limit of parabolic rescalings by any sequence $\lambda_i\to\infty$ of $(\mathcal{X},(\nu_{x;s})_{s\leq \mathfrak{t}(x)})$, where as above $\nu_{x;s}$ denotes the conjugate heat kernel measures centered at $x$. We say that $\mathcal{X}$ has a spherical singularity at $x$, if some (and thus any) tangent flow at $x$ is a round shrinking sphere. More generally than tangent flows, which are given as rescaling limits around a fixed center $x$, one can consider rescalings around any sequence of space-time points $x_i\to x$. A blowup limit at $x$ is any $\mathbb{F}$-limit of parabolic rescalings by any sequence $\lambda_i\to\infty$ of $(\mathcal{X},(\nu_{x_i;s})_{s\leq \mathfrak{t}(x_i)})$. While tangent flows are always self-similarly shrinking as a consequence of Perelman's monotonicity formula, the analysis of general blowup limits is much more involved. Finally, we say that there is an accumulation of spherical singularities at $x\in\mathcal{X}$, if the flow $\mathcal{X}$ has spherical singularities at a sequence of pairwise distinct space-time points $x_i\to x$.
Using the above notions, we can now state our main result:

 \begin{theorem}\label{thm_equivalence} 
Perelman's ancient ovals occur as blowup limit in a 3-dimensional Ricci flow through singularities if and only if there is an accumulation of spherical singularities.\end{theorem}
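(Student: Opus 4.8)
The plan is to prove the two implications separately, with the "if" direction (accumulation of spherical singularities $\Rightarrow$ ancient ovals appear) being the substantive one.

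For the easier "only if" direction, I would argue by contradiction: suppose Perelman's ancient oval $\mathcal{O}$ arises as an $\mathbb{F}$-limit of parabolic rescalings $\lambda_i$ of $(\mathcal{X},(\nu_{x_i;s}))$ with $x_i\to x$ and $\lambda_i\to\infty$, but there is no accumulation of spherical singularities at $x$. Then on a small backward parabolic neighborhood of $x$ the only singularities are neckpinches (and the regular part), so after rescaling the flow is, in a large region, $\varepsilon$-close to a round cylinder or a smooth flow of bounded curvature away from finitely many cylindrical singular points which stay at fixed $\mathbb{F}$-distance. One then uses Bamler's compactness theory together with the structure theory of singular Ricci flows in 3d to see that any blowup limit in this situation must itself be a metric flow whose singularities are only cylindrical — hence it cannot be the ancient oval, since the ancient oval $\mathcal{O}$, run forward, develops a spherical singularity and appears as a tangent flow at that singularity; more to the point, $\mathcal{O}$ is not a (rescaled piece of a) flow whose tangent flows are all cylinders. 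The cleanest way to make this rigorous is: near a non-accumulation point, the reduced volume/entropy $\mathcal{N}_{x_i}$ along the relevant worldlines is bounded below uniformly away from the sphere entropy, and the $\mathbb{F}$-limit inherits this bound, which contradicts the fact that the ancient oval has Nash entropy strictly below that of the cylinder and approaching that of the sphere as $t\to 0^-$ (equivalently, the ancient oval contains points with entropy arbitrarily close to the sphere). This rules out $\mathcal{O}$ as a blowup limit.

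For the "if" direction, suppose there is an accumulation of spherical singularities at $x$, i.e. spherical singularities at pairwise distinct $x_i\to x$. The strategy is to run a point-selection / dynamic rescaling argument. Since each $x_i$ is a spherical singularity, for every scale the flow near $x_i$ is, after rescaling, close to a round shrinking $S^3$. The key observation is that a round shrinking $S^3$ has Nash entropy strictly less than the round shrinking cylinder $\mathbb{R}\times S^2$ (in the appropriate normalization), while far away from $x_i$ — near the accumulation point $x$, at comparable scales — the flow must be close to a neck, since between distinct nearby spherical singularities the geometry degenerates. So along a carefully chosen sequence of space-time points $y_i$ and scales $\mu_i\to\infty$, interpolating between the spherical region and the necky region, the rescaled flows $(\mathcal{X},(\nu_{y_i;s}))$ have entropy (Nash entropy at a fixed scale) strictly between that of the sphere and that of the cylinder, and moreover are $\varepsilon$-necklike on one end and $\varepsilon$-spherelike on the other. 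By Bamler's $\mathbb{F}$-compactness, a subsequence converges to a blowup limit $\mathcal{X}_\infty$ which is an ancient metric flow, is noncollapsed, and — this is where the deep input enters — is $\kappa$-noncollapsed, has nonnegative curvature operator in the limit (Hamilton–Ivey in 3d passes to the limit), has bounded entropy, is not the round sphere (it has a neck), is not a round cylinder or Bryant soliton (it has a collapsing spherical end), and is compact. By the classification of such ancient flows in 3d — specifically, the recent uniqueness result of Brendle–Daskalopoulos–Sesum \cite{BDS} characterizing ancient $\kappa$-noncollapsed flows on $S^3$ that are not round — the limit $\mathcal{X}_\infty$ must be (a parabolic rescaling of) Perelman's ancient oval.

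The main obstacle is the "if" direction, and within it the point-selection argument producing a sequence whose blowup limit is genuinely a compact, non-round, non-cylindrical, non-Bryant ancient flow rather than, say, a cylinder, a Bryant soliton, or a round sphere. Concretely, one must choose $y_i$ and $\mu_i$ so that in the limit the flow simultaneously (i) is defined for all $t\le 0$ (ancientness, which follows from the accumulation forcing the rescaled flows to exist on longer and longer backward intervals), (ii) is compact — this requires controlling that the diameter at the blowup scale stays bounded, which is the delicate part, and is where one exploits that there are infinitely many spherical singularities $x_i$ clustering, so the flow cannot open up into a complete noncompact end at that scale, and (iii) is neither round nor a soliton, which is arranged by choosing the center $y_i$ and scale $\mu_i$ precisely at the threshold between the cylindrical and spherical behaviour using a continuity/intermediate-value argument on a geometric quantity (e.g. the ratio of the diameter to the maximal neck width, or the Nash entropy) along the worldline from $x_i$. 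Once the limit is known to be a compact, noncollapsed, non-round ancient flow on $S^3$ with bounded entropy and nonnegative curvature, the BDS classification \cite{BDS} — combined with Bamler's theory identifying such metric flows with smooth ancient Ricci flows — closes the argument, so the technical heart is entirely in the selection of scales and in verifying the hypotheses needed to invoke \cite{BDS}.
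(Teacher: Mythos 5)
Your overall strategy is the right one (and is essentially the paper's: blow up at the borderline between spherical and cylindrical behaviour, then invoke Brendle--Daskalopoulos--Sesum), but in both directions the step you yourself flag as ``the delicate part'' is exactly the step that is missing, and in each case the gap is genuine rather than cosmetic.

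In the ``if'' direction, after rescaling each spherical singularity $x_i$ by its largest spherical scale $S(x_i)$ you must rule out that the blowup limit is simply the round shrinking sphere; for that you need the smallest cylindrical scale $Z(x_i)$ to satisfy $Z(x_i)\leq C\,S(x_i)$ with $C$ independent of $i$. Your proposed ``continuity/intermediate-value argument on the ratio of the diameter to the maximal neck width'' does not produce this: there is no a priori continuity or monotonicity of such a ratio across scales, and without a uniform bound on $Z(x_i)/S(x_i)$ the limit sees no neck at any finite scale. The paper gets this bound from quantitative differentiation: the pointed Nash entropy $\mathcal{N}_{x_i}$ is monotone and uniformly bounded below near the neck singularity $x$ (semicontinuity of the entropy), so by pigeonhole among the first $N\lceil\eps^{-1}\rceil$ dyadic scales above $S(x_i)$ there is a scale with small entropy drop, hence (almost rigidity in the monotonicity formula) an $\eps$-selfsimilar scale; one then eliminates $\mathbb{R}^3$ (local regularity), the sphere (definition of $S(x_i)$), and nontrivial space forms (Hamilton), leaving a cylinder or its $\mathbb{Z}_2$-quotient within a bounded number of dyadic scales. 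You also skip two structural points the argument needs: that the accumulation point $x$ is necessarily a neck or \emph{quotient} neck singularity (spherical tangent flows at $x$ are excluded because, by Hamilton, round singularities are isolated), and that quotient solitons must be carried through the case analysis.

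In the ``only if'' direction, your entropy-gap argument does not survive the case where $x$ itself is a spherical singularity: then $\mathcal{N}_{x}$ is \emph{not} bounded away from the sphere entropy near $x$, yet there are still only finitely many spherical singularities, so this case must be handled. The paper's mechanism is different and closes precisely this loophole: if an ancient oval arises as $\mathbb{F}$-limit of $(\mathcal{X}^{x_i,\lambda_i},\nu^{x_i,\lambda_i})$, then smooth convergence of the time $-1$ slices (local regularity theorem) together with compactness and positive curvature of the oval produces actual spherical singularities of $\mathcal{X}$ at points $y_i\to x$; by Bamler's change-of-base-point theorem the rescalings centered at $y_i$ still converge to the oval; finiteness forces $y_i=x$ for large $i$, so the limit is a tangent flow at $x$, which is selfsimilar by Perelman's monotonicity --- contradicting the non-selfsimilarity of the oval. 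Your sketch gestures at ``the oval develops a spherical singularity'' but never converts that into a statement about the original flow $\mathcal{X}$, which is where the change-of-base-point theorem and the selfsimilarity of tangent flows are indispensable.
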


In essence, we use a similar scheme of proof as in our prior joint work with Hershkovits \cite{CHH}, where we proved a corresponding result for the mean curvature flow. Namely, if there is a sequence of spherical singularities converging to a (quotient) neck singularity, then we blow up by the {largest} spherical scale to construct a nonselfsimilar blowup limit, and conversely we show by contradiction that Perelman's ancient ovals cannot occur as blowup limit if there are only finitely many spherical singularities. However, while the theory of mean curvature flow through singularities has been very well developed over the past 40 years, the study of Ricci flow through singularities was only initiated recently \cite{KL,HN,Sturm,BK,Bam2,Bam3,CH}. Hence, we need to be more careful to set up the geometric analytic framework, and several properties that are well known or obvious for mean curvature flow -- such as quantitative differentiation and change of base-points -- have to be implemented with more care. Another new feature in the setting of Ricci flow are quotients by finite isometry groups. Hence, when analyzing the almost equality case of the monotonicity formula we have to include the possibility of nontrivial spherical space forms and the $\mathbb{Z}_2$-quotients of the cylinder.\\

Finally, motivated by the recent proof of the mean-convex neighborhood conjecture \cite{CHH_mean,CHHW} and the higher-dimensional uniqueness result from Brendle-Daskalopoulos-Naff-Sesum \cite{BDNS}, it seems likely that there is a version of Theorem \ref{thm_equivalence} for Ricci flow through neck-singularities in higher dimensions. However, let us also remark that while blowup limits in 3d Ricci flow are always modelled on $\kappa$-solutions, quotient necks  in higher dimensions can lead to new phenomena. In particular, examples of 4d steady solitons asymptotic to {quotient} necks are quotients of the 4d-Bryant soliton, which have an orbifold singularity, and Appleton's solitons \cite{App}, which are asymptotic to $\mathbb{R}\times S^3/\mathbb{Z}_k${, $k\ge 3$,} and have curvatures of mixed signs.\\

\noindent\textbf{Acknowledgments.} The second author has been partially supported by an NSERC Discovery Grant (RGPIN-2016-04331) and a Sloan Research Fellowship. {We thank the referees for their detailed comments and suggestions.}\\

\section{Notation and preliminaries}\label{sec_prelim}

As introduced by Bamler \cite[Definition 3.2]{Bam2} a metric flow over $I\subseteq\mathbb{R}$, 
\begin{equation}
\mathcal{X}=\left(\mathcal{X},\mathfrak{t},(d_t)_{t\in I},(\nu_{x;s})_{x\in \mathcal{X}, s\in I,s\leq \mathfrak{t}(x)}\right),
\end{equation}
consists of a set $\mathcal{X}$, a time-function $\mathfrak{t}:\mathcal{X}\to \mathbb{R}$, complete separable metrics $d_t$ on the time-slices $\mathcal{X}_t=\mathfrak{t}^{-1}(t)$, and probability measures $\nu_{x;s}\in \mathcal{P}(\mathcal{X}_s)$, called conjugate heat kernel measures, such that:
\begin{itemize}
\item $\nu_{x;\mathfrak{t}(x)}=\delta_x$ for all $x\in \mathcal{X}$, and for all $t_1\leq t_2\leq t_3$ in $I$ and all $x\in\mathcal{X}_{t_3}$ we have the Kolmogorov consistency condition $
\nu_{x; t_1} = \int_{\mathcal{X}_{t_2}} \nu_{\cdot; t_1}\, d\nu_{x; t_2}$.
\item For all $s<t$ in $I$, any $T>0$, and any $T^{-1/2}$-Lipschitz function $f_s:\mathcal{X}_s\to\mathbb{R}$, setting $v_s=\Phi\circ f_s$, where $\Phi:\mathbb{R}\to (0,1)$ denotes the antiderivative of $(4\pi)^{-1}e^{-y^2/4}$, the function
$
v_t:\mathcal{X}_t\to \mathbb{R},\, x \mapsto \int_{\mathcal{X}_s} v_s \, d\nu_{x; s}
$
is of the form $v_t=\Phi\circ f_t$ for some $(t-s+T)^{-1/2}$-Lipschitz function $f_t:\mathcal{X}_t\to\mathbb{R}$.
\end{itemize}
In particular, on any metric flow we always have a heat flow of integrable functions and a conjugate heat flow of probability measures, which are defined for $s\leq \mathfrak{t}(x)$ via the formulas
\begin{equation}\label{heat_flow_def}
v_{\mathfrak{t}(x)}(x):= \int_{\mathcal{X}_s} v_s \, d\nu_{x; s},\qquad
\mu_s:= \int_{\mathcal{X}_t} \nu_{x; s} \, d\mu_{\mathfrak{t}(x)}(x)\, .
\end{equation}
We recall from \cite[Definition 3.30 and Definition 4.25]{Bam2} that a metric flow $\mathcal{X}$ is called $H_n$-concentrated, where $H_n=(n-1)\pi^2/2+4$,  if for all $s\leq t$ in $I$ and all $x_1,x_2\in \mathcal{X}_t$ we have the variance bound
\begin{equation}
\textrm{Var}(\nu_{x_1; s}, \nu_{x_2; s})\leq d_t^2(x_1,x_2)+H_n(t-s),
\end{equation}
and is called future-continuous at $t_0\in I$ if for all conjugate heat flows $(\mu_t)_{t\in I'}$ with finite variance and $t_0\in I'$, the function
$
t\mapsto \int_{\mathcal{X}_t}\int_{\mathcal{X}_t} d_t\, d\mu_t \, d\mu_t
$
is right continuous at $t_0$. {Throughout this note, we will work with an $H_3$-concentrated future-continuous metric flow $\mathcal{X}$ that satisfy the partial regularity results from \cite{Bam3}, and such that $\mathcal{L}_{\partial_{\mathfrak{t}}} g = -2 \textrm{Ric}(g)$ and $R(g)\geq -6$ holds on the regular part.}

Let $\mathcal{X}$ be a metric flow as above. Given any rescaling factors $\lambda_i\to \infty$ and space-time points $x_i\to x$, we consider the sequence of parabolically rescaled flows
\begin{equation}
\mathcal{X}^{x_i,\lambda_i}=\left(\mathcal{X},\lambda_i^2(\mathfrak{t}-\mathfrak{t}(x_i)),(d_{\lambda_i^2(t-\mathfrak{t}(x_i))})_{t\in I},(\nu_{x;\lambda_i^2(s-\mathfrak{t}(x_i))})_{x\in \mathcal{X}, s\in I,s\leq \mathfrak{t}(x)}\right),
\end{equation}
equipped with the parabolically rescaled adjoint heat kernel measures
\begin{equation}
\nu_{s}^{x_i,\lambda_i}=\nu_{x_i, \mathfrak{t}(x_i)+\lambda_i^{-2}s}.
\end{equation}
By Bamler's compactness theory \cite{Bam2}, the metric flow pair $(\mathcal{X}^{x_i,\lambda_i},(\nu_{s}^{x_i,\lambda_i})_{s\leq 0})$ subsequentially converges to a limit $(\mathcal{X}^\infty,(\nu^\infty_{x_{\max};s})_{s\leq 0})$, called a blowup limit at $x$. A priori the limit is just a metric flow pair  obtained in the sense of $\mathbb{F}$-convergence on compact time-intervals within some correspondence, as defined in \cite[Section 6]{Bam2}. However, since we are working in dimension 3, by \cite[Theorem {2.44}]{Bam3} (see also Perelman \cite{Per1,Per2}) all blowup limits  are $\kappa$-noncollapsed, have nonnegative curvature, and are smooth with bounded curvature on compact time intervals. Hence, by the local regularity theorem \cite[Theorem {2.29}]{Bam3} (see also Hein-Naber \cite{HeinN}) the convergence is actually locally smooth. {Finally, recall that if $(\mathcal{X}^{x_i,\lambda_i},(\nu_{s}^{x_i,\lambda_i})_{s\leq 0})$ converges to a blowup limit $(\mathcal{X}^\infty,\nu^\infty)$, and if for some sequence of space-time points $\widetilde{x}_i$  the sequence of probability measures $\widetilde{\nu}^i=(\nu_{s}^{\widetilde{x}_i,{\lambda}_i})_{s\leq 0}$ converges to the same limiting measure $\nu^\infty$, then by Bamler's change of base-point theorem \cite[Theorem 6.40]{Bam2} the sequence $(\mathcal{X}^{\widetilde{x}_i,\lambda_i},\widetilde{\nu}^i)$ also converges to the same limit $(\mathcal{X}^\infty,\nu^\infty)$.}

\section{The proofs}

To prove Theorem \ref{thm_equivalence}, we proceed by establishing the following two propositions.

\begin{prop}\label{prop1}
If there is a sequence of pairwise distinct spherical singularities $x_i$ converging to a neck or quotient neck singularity at $x\in\mathcal{X}$, then Perelman's ancient ovals occur as blowup limit at $x$.
\end{prop}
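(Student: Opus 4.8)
My plan is to follow the scheme of the mean curvature flow argument in \cite{CHH}, adapted to the metric-flow setting. Since $x_i$ are spherical singularities accumulating at the neck (or quotient neck) singularity $x$, each $x_i$ has a well-defined ``spherical scale'': the largest scale $\lambda_i^{-1}$ at which the flow, rescaled around $x_i$, is still $\varepsilon$-close to a round shrinking sphere in the $\mathbb{F}$-sense on some fixed compact time interval. First I would make this precise using quantitative differentiation: there is a function $\lambda_i\to\infty$ (by $x_i\to x$, the spherical scales of the $x_i$ must go to infinity, since at the fixed scale $\sim 1$ the flow near $x$ looks like a neck, not a sphere). I would then consider the parabolically rescaled flows $\mathcal{X}^{x_i,\lambda_i}$ with conjugate heat kernel measures $\nu^{x_i,\lambda_i}$ centered at $x_i$, and pass to a subsequential $\mathbb{F}$-limit $(\mathcal{X}^\infty,\nu^\infty)$ using Bamler's compactness theorem; by the 3-dimensional regularity results quoted in Section \ref{sec_prelim}, this limit is a smooth $\kappa$-solution on compact time intervals, and the convergence is locally smooth.

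The key point is to identify $\mathcal{X}^\infty$ with Perelman's ancient oval. There are three things to check. First, by the choice of $\lambda_i$ as (essentially) the spherical scale, at time $0$ the limit is $\varepsilon$-close to but not equal to a round sphere — so $\mathcal{X}^\infty$ is an ancient $\kappa$-solution that is \emph{not} the round shrinking sphere. Second, I must rule out the shrinking cylinder (and its $\mathbb{Z}_2$-quotients) and the Bryant soliton: the cylinder is ruled out because the flow becomes extinct (it does contain spherical points $x_i$ nearby at bounded rescaled time, forcing compactness of nearby time-slices); more carefully, one argues that since $x_i$ is genuinely a spherical \emph{singularity}, the rescaled flow $\mathcal{X}^{x_i,\lambda_i}$ becomes singular at rescaled time $\to 0^+$ in a way that forces the limit to be a compact ancient flow, hence neither a cylinder nor a Bryant soliton. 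Third, having reduced to a \emph{compact} ancient $\kappa$-solution in dimension $3$, I invoke the classification: by Brendle-Daskalopoulos-Sesum \cite{BDS} (building on Perelman), the only compact, non-round, ancient $\kappa$-solutions in dimension $3$ are Perelman's ancient ovals (and their quotients). To pin down that we get the oval and not a quotient, I use that $x$ is a neck or quotient-neck singularity and that the oval must be asymptotic, as $t\to-\infty$, to the neck that appears as a tangent flow at $x$ — a change-of-base-point argument (\cite[Theorem 6.40]{Bam2}, as recalled at the end of Section \ref{sec_prelim}) relates rescalings around $x$ and around the $x_i$ and shows the asymptotic cylinder of $\mathcal{X}^\infty$ matches the tangent flow at $x$.

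The main obstacle I anticipate is the careful bookkeeping needed to define the spherical scale $\lambda_i$ and to verify that the resulting blowup limit is genuinely noncompact-in-backward-time yet compact-in-forward-time — i.e., that it is a full ancient oval and not, say, a cylinder capped only on one side or a static slice. Concretely, one must: (i) upgrade the statement ``$x_i$ is a spherical singularity'' to quantitative $\mathbb{F}$-closeness to the sphere on a uniform compact time interval at scale $\lambda_i^{-1}$, using that tangent flows at $x_i$ are round; (ii) show that at scale $\lambda_i^{-1}$, pushed slightly forward in time, the flow is no longer $\varepsilon$-close to a sphere — this is where quantitative differentiation and the a priori derivative estimates on $\kappa$-solutions enter, and it forces the limit to move and hence be a nontrivial ancient oval; and (iii) handle the finite-isometry-group quotients when matching the backward asymptotics to the (quotient) neck at $x$. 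A secondary technical point is that ``change of base-point'' and ``quantitative differentiation'' are not as turnkey here as for mean curvature flow, so some care is needed to phrase these in the language of $\mathbb{F}$-convergence of metric flow pairs; but given the tools recalled in Section \ref{sec_prelim}, I expect these to go through.
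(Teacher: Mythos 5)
Your overall scheme coincides with the paper's: blow up each $x_i$ by its largest spherical scale, extract an $\mathbb{F}$-limit using Bamler's compactness and the 3d regularity theory, and identify the limit as an oval via compactness plus the Brendle--Daskalopoulos--Sesum classification. The genuine gap is in how you rule out that the limit is the round shrinking sphere itself (and, relatedly, a quotient). The paper's mechanism is a uniform two-sided comparison between the largest spherical scale $S(x_i)$ and the smallest cylindrical scale $Z(x_i)$, namely $S(x_i)\leq Z(x_i)\leq C\,S(x_i)$ with $C$ independent of $i$. The upper bound is the technical heart of the proof: by the almost-rigidity case of the monotonicity formula \cite[Theorem 2.20]{Bam3}, combined with a uniform lower Nash entropy bound coming from semicontinuity at the (quotient) neck point $x$, within a bounded number $N\lceil\eps^{-1}\rceil$ of dyadic scales above $S(x_i)$ there must be a scale at which the flow is $\eps$-close to \emph{some} metric soliton; one then eliminates flat $\mathbb{R}^3$ (local regularity versus $x_i$ being singular), the round sphere (maximality of $S(x_i)$), and nontrivial spherical space forms (Hamilton \cite{Ham}), leaving only the cylinder or its $\mathbb{Z}_2$-quotients. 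In the limit, $S(x_\infty)\geq 1$ gives compactness and excludes quotients, while $Z(x_\infty)\leq C$ excludes the round sphere, and \cite{BDS} then forces an oval. Your proposal only gestures at this step ("quantitative differentiation \dots enters") and instead suggests showing that the flow "pushed slightly forward in time" is no longer close to a sphere; as stated this does not obviously survive the passage to the limit (non-closeness at the next scale must be converted into closeness to a \emph{different, non-spherical} model at a uniformly controlled nearby scale), and you supply no mechanism for producing the nearby cylindrical scale that actually does the work.

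A secondary misdirection: to exclude quotients you propose to match the backward ($t\to-\infty$) asymptotics of $\mathcal{X}^\infty$ with the tangent flow at $x$ via change of base-point. These live at unrelated scales (roughly $\sqrt{|t|}\,S(x_i)$ versus scales tending to $0$ around $x$), and no such matching is needed: $\eps$-closeness to the genuine round $S^3$ at scale $S(x_i)$, i.e.\ $S(x_\infty)\geq 1$, already pins down the topology and excludes quotient ovals and nontrivial space forms. Where the change-of-base-point theorem \cite[Theorem 6.40]{Bam2} is actually used in this proposition is earlier and for a different purpose: to transport the (quotient) neck tangent flow at $x$ to rescalings centered at $x_i$ and thereby prove $Z(x_i)\to 0$, hence $S(x_i)\to 0$; your heuristic "at fixed scale the flow near $x$ looks like a neck" needs exactly this to become an argument.
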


Here, by neck or quotient neck singularity at $x$ we mean that some tangent flow at $x$ is either a round shrinking $S^2\times\mathbb{R}$ or one of its $\mathbb{Z}_2$-quotients, respectively.

\begin{proof}
Fix a small enough constant $\eps>0$. Given $x\in\mathcal{X}$, we consider the rescaled and restricted flow
\begin{equation}
\mathcal{X}^{\alpha}_x:=\left(\mathcal{X}^{x,{1}/{r_{\alpha}}}|_{(-\eps^{-1},0]}, (\nu ^{x, 1/{r_\alpha}}_s)_{s\in (-\eps^{-1},0]}  \right)
\end{equation}
on dyadic scales $r_\alpha=2^\alpha$, where $\alpha\in\mathbb{Z}$. We say that $\mathcal{X}$ is $\eps$-selfsimilar around $x$ at scale $r_\alpha$, if
\begin{equation}
d_{\mathbb{F}}(\mathcal{X}^{\alpha}_x,\mathcal{S}) <\eps
\end{equation}
for some metric soliton $\mathcal{S}$ that becomes extinct at time zero, where $d_{\mathbb{F}}$ denotes the $\mathbb{F}$-distance on the time interval $(-\eps^{-1},0]$ between metric flow pairs \cite[Definition 5.8]{Bam2}.
Since we are working in dimension 3, as a consequence of \cite[Theorem {2.44}]{Bam3} and \cite{Per2} the only metric solitons are flat $\mathbb{R}^3$, the round shrinking $S^3$, the round shrinking $\mathbb{R}\times S^2$, as well as finite quotients thereof (note also that a lower bound for the Nash entropy, which we always have near any neck singularity, gives an upper bound for the order of the quotient group). In particular, as a consequence of the local regularity theorem \cite[Theorem {2.29}]{Bam3} we actually have
\begin{equation}
d_{C^{\lfloor 1/ \tilde{\eps}\rfloor}}(\mathcal{X}^{\alpha}_x,\mathcal{S}) <\tilde{\eps}
\end{equation}
with $\tilde{\eps}(\eps)\to 0$ as $\eps\to 0$, where  the $C^{\lfloor 1/ \tilde{\eps}\rfloor}$-distance is modulo diffeomorphisms on $B_{1/\tilde{\eps}}\times (-\tilde{\eps}^{-1},\tilde{\eps}]$.\\

For any space-time point $x\in\mathcal{X}$ we denote by $S(x)$ the largest spherical scale, i.e. the supremum of $r_\alpha$ such that $\mathcal{X}$ is $\eps$-close around $x$ at scale $r_\alpha$ to a round shrinking sphere, and by $Z(x)$ the infimum of $r_\alpha$ such that $\mathcal{X}$ is $\eps$-close around $x$ at scale $r_\alpha$ to a round shrinking cylinder or to one of its $\mathbb{Z}_2$-quotients.\\

Now assume $x_i\in \mathcal{X}$ is a sequence of spherical singularities converging to a neck or quotient neck singularity $x\in \mathcal{X}$. Since the flow has a spherical singularity at $x_i$ it clearly holds that $S(x_i)>0$. On the other hand, recall that $x_i\to x$ by definition means convergence in the natural topology \cite[Definition 3.43]{Bam2} and by {\cite[Proposition 3.45]{Bam2}} is equivalent to $\mathfrak{t}(x_i)\to \mathfrak{t}(x)$ and $\nu_{x_i;s}\to \nu_{x;s}$ for all $s<\mathfrak{t}(x)$ in the $W_1$-Wasserstein distance.
{Since the flow has a (quotient) neck singularity at $x$, applying the change of base-point theorem {\cite[Theorem 6.40]{Bam2}}, as recalled in the previous section, we thus obtain a sequence of metric flows around $x_i$ rescaled by  $\lambda_i\to \infty$ which still converges to a (quotient) neck. This implies
\begin{equation}
\lim_{i\to \infty} Z(x_i)=0.
\end{equation}}

\bigskip

Next, recall that the pointed Nash entropy, which is monotone by \cite{Per1}, is defined as
\bea \mathcal{N}_{x_i}(\tau) = \int_{\mathcal{X}_{\mathfrak{t}(x_i)-\tau}} f_{x_i}(y,\mathfrak{t}(x_i)-\tau )\, d\nu_{x_i;\mathfrak{t}(x_i)-\tau }(y) -\frac{n}{2}, \eea
where $f_x$ is the (almost everywhere defined) potential function for the heat kernel measure, namely
\bea d\nu_{x_i;\mathfrak{t}(x_i)-\tau}(y) = (4\pi\tau)^{-3/2} e^{-f_{x_i}(y,\mathfrak{t}(x_i)-\tau)}\, d\mathrm{Vol}_{g_{\mathfrak{t}(x_i)-\tau}}(y) .\eea  
By Hamilton's roundness estimate \cite{Ham} we have $S(x_i)\leq Z(x_i)$. On the other hand, we claim that 
\begin{equation}\label{eq_bdd_ratio}
Z(x_i)\leq C S(x_i).
\end{equation}
To see this, recall that by the almost rigidity case of the monotonicity formula from \cite[Theorem {2.20}]{Bam3} we can find a constant $\delta>0$, such that if $\mathcal{N}_{x_i}(r_{\alpha})-\mathcal{N}_{x_i}(r_{\alpha+\lceil \e^{-1} \rceil})<\delta$ then $\mathcal{X}$ is $\eps$-close around $x_i$ at scale $r_\alpha$ to a metric soliton. Recall that since we are working in dimension 3 the only metric solitons are flat $\mathbb{R}^3$, the round shrinking $S^3$, the round shrinking $\mathbb{R}\times S^2$, as well as finite quotients thereof. Now, if $r_{\alpha_i}=S(x_i)$ is the largest spherical scale, then by monotonicity there must be some $\beta_i \in \{\alpha_{i}+1,\ldots,\alpha_{i}+N\lceil \e ^{-1}   \rceil  \}$, where $N<\infty$ is {a positive integer} independent of $i$, such that $\mathcal{X}$ is $\eps$-close around $x_i$ at scale $r_{\beta_i}$ to a metric soliton. {To see this, note since the flow has a (quotient) neck singularity at $x$, by the semicontinuity of the Nash entropy from \cite[Proposition 4.37]{Bam3}, which is applicable thanks to our assumption that the scalar curvature is bounded below, we get a uniform entropy bound $\mathcal{N}_{x_i}(\tau_0)\geq -Y$ at some fixed scale $\tau_0>0$. Hence, if there was no $\beta_i$ such that $\mathcal{N}_{x_i}(r_{\beta_i})-\mathcal{N}_{x_i}(r_{\beta_i+\lceil \e^{-1} \rceil})<\delta$, then for $i$ large enough by monotonicity we would obtain $\mathcal{N}_{x_i}(r_{\alpha_i+N\lceil \e ^{-1}   \rceil })\leq \mathcal{N}_{x_i}(r_{\alpha_i})-N\delta$, which yields a contradiction provided we choose $N>Y/\delta$.} Now, if $\mathcal{X}$ was $\eps$-close around $x_i$ at scale $r_{\beta_i}$ to flat $\mathbb{R}^3$, then, provided $\eps$ is small enough, the local regularity theorem \cite[Theorem {2.29}]{Bam3} would yield a contradiction with the assumption that $x_i$ is a singular point. Also, if $\mathcal{X}$ was $\eps$-close around $x_i$ at scale $r_{\beta_i}$ to a round shrinking sphere or a round shrinking nontrivial spherical space form, then we would obtain a contradiction with the definition of $S(x_i)$ or with Hamilton's convergence theorem \cite{Ham}, respectively. Hence, $\mathcal{X}$ is $\eps$-close around $x_i$ at scale $r_{\beta_i}$ to a round shrinking $\mathbb{R}\times S^2$ or one of its $\mathbb{Z}_2$-quotients. This shows that $Z(x_i)/S(x_i)\leq 2^{N\lceil \e ^{-1}   \rceil  } $, and thus proves \eqref{eq_bdd_ratio}.\\

Now, considering the rescaled flows $(\mathcal{X}^{x_i,1/S(x_i)}, (\nu^{x_i,1/S(x_i)})_{s\le 0})$ by Bamler's compactness theory \cite{Bam2} we can pass to a subsequential limit $(\mathcal{X}^\infty,(\nu^\infty_{x_{\infty};s})_{s\leq 0})$. By \cite[Theorem {2.44}]{Bam3} (see also \cite{Per1,Per2}) any such 3-dimensional blowup limit $\mathcal{X}^\infty$ is $\kappa$-noncollapsed, has nonnegative curvature, and is smooth -- with bounded curvature on compact time intervals -- until it becomes extinct. Moreover, by construction $\mathcal{X}^\infty$ satisfies $S({x_{\infty}})\geq 1$ and $Z({x_{\infty}})\leq C$. Hence, by the recent classification of compact $\kappa$-solutions from Brendle-Daskalopoulos-Sesum \cite{BDS} we conclude that $\mathcal{X}^\infty$ must be an ancient oval. This proves the proposition.
\end{proof}

\bigskip

\begin{prop}\label{prop2}
If there are only finitely many spherical singularities near $x\in\mathcal{X}$, then Perelman's ancient ovals do not occur as blowup limit at $x$.
\end{prop}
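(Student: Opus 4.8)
I would argue by contradiction. Suppose Perelman's ancient oval $\mathcal{X}^\infty$ occurs as a blowup limit at $x$, so that there are $x_i\to x$ and $\lambda_i\to\infty$ with $(\mathcal{X}^{x_i,\lambda_i},(\nu_s^{x_i,\lambda_i})_{s\le 0})\to(\mathcal{X}^\infty,(\nu^\infty_{x_\infty;s})_{s\le 0})$. The crucial structural fact --- which is what makes the oval special here --- is that the ancient oval carries a spherical singularity \emph{in the future}: by definition it is a compact ancient $\kappa$-solution that becomes extinct at a round point. Concretely, its time-zero slice $N:=\mathcal{X}^\infty_0$ is a closed $3$-manifold diffeomorphic to $S^3$, and, being a time-slice of a compact $\kappa$-solution (hence of positive sectional curvature), the metric $g_\infty(0)$ has positive Ricci curvature; so by Hamilton's theorem \cite{Ham} the forward Ricci flow of $(N,g_\infty(0))$ becomes extinct at a round point at some finite time $T_{\mathrm{ext}}>0$. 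The plan is to use this future spherical singularity to manufacture infinitely many spherical singularities of $\mathcal{X}$ accumulating at $x$, contradicting the hypothesis.

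The first step is to transfer this spherical singularity to the approximating flows. Since we are in dimension $3$, the convergence $\mathcal{X}^{x_i,\lambda_i}\to\mathcal{X}^\infty$ is locally smooth by the local regularity theorem \cite[Theorem 2.29]{Bam3}; restricted to the time-zero slice, and using that $N$ is compact, this means that for $i$ large the relevant component $N_i$ of $\mathcal{X}^{x_i,\lambda_i}_0$ is a closed $3$-manifold diffeomorphic to $S^3$ whose metric is $C^k$-close to $g_\infty(0)$, in particular of positive Ricci curvature. Hamilton's theorem \cite{Ham} then applies to the forward flow of $N_i$ inside $\mathcal{X}^{x_i,\lambda_i}$, so $\mathcal{X}^{x_i,\lambda_i}$ has a spherical singularity at a space-time point $x_i^+$ with $t_i^+:=\mathfrak{t}^{x_i,\lambda_i}(x_i^+)\to T_{\mathrm{ext}}$. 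Since having a spherical singularity is parabolically scale-invariant, $x_i^+$ corresponds to a spherical singularity $\widehat x_i^+\in\mathcal{X}$ of the original flow with $\mathfrak{t}_{\mathcal{X}}(\widehat x_i^+)=\mathfrak{t}(x_i)+\lambda_i^{-2}t_i^+\to\mathfrak{t}(x)$.

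The second step is to check that $\widehat x_i^+\to x$ in the natural topology of $\mathcal{X}$. The time-components already converge; for the spatial part one needs $\nu_{\widehat x_i^+;s}\to\nu_{x;s}$ in $W_1$ for every $s<\mathfrak{t}(x)$. I would obtain this by writing, via Kolmogorov consistency, $\nu_{\widehat x_i^+;s}=\int\nu_{y;s}\,d\nu_{\widehat x_i^+;\mathfrak{t}(x_i)}(y)$, noting that $\nu_{\widehat x_i^+;\mathfrak{t}(x_i)}$ is concentrated, up to Gaussian tails, within distance $O(\lambda_i^{-1})$ of $x_i$ in the slice $\mathcal{X}_{\mathfrak{t}(x_i)}$, and then invoking Bamler's contraction/monotonicity estimate for the $W_1$-distance between conjugate heat kernels to conclude $d_{W_1}(\nu_{y;s},\nu_{x_i;s})\to0$ uniformly over the support and hence $d_{W_1}(\nu_{\widehat x_i^+;s},\nu_{x_i;s})\to0$; together with $\nu_{x_i;s}\to\nu_{x;s}$ this gives $\widehat x_i^+\to x$. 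I expect this conjugate-heat-kernel estimate to be the main obstacle of the proof, as it is the one place where one has to work with the metric-flow structure rather than with classical Ricci flow; the remaining steps reduce to Hamilton's theorem together with the $3$-dimensional partial regularity and compactness theory recalled above.

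The third step closes the argument. By hypothesis there are only finitely many spherical singularities in some neighborhood $U$ of $x$; since $\widehat x_i^+\to x$ we have $\widehat x_i^+\in U$ for $i$ large, so after passing to a subsequence $\widehat x_i^+=p$ for a single spherical singularity $p\in U$, and then $p=\lim_i\widehat x_i^+=x$. Hence $x$ is itself a spherical singularity, with tangent flow the round shrinking $S^3$, and moreover $\mathfrak{t}(x)-\mathfrak{t}(x_i)=\lambda_i^{-2}t_i^+$, i.e.\ $x_i\to x$ at the parabolic rate $\lambda_i^{-2}$. But then $\mathcal{X}^{x,\lambda_i}$ converges --- locally smoothly by \cite[Theorem 2.29]{Bam3} --- to the round shrinking $S^3$, and the points $x_i$ lie at the times $-t_i^+\to-T_{\mathrm{ext}}<0$ inside the compact slices of this limit flow; recentering at $x_i$ (which cannot escape to infinity, the limit being compact), the flows $\mathcal{X}^{x_i,\lambda_i}$ then converge, smoothly and in particular in the $\mathbb{F}$-sense, to a round shrinking $S^3$. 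This contradicts $(\mathcal{X}^{x_i,\lambda_i},\nu^{x_i,\lambda_i})\to(\mathcal{X}^\infty,\nu^\infty)$, since an ancient oval is not a round shrinking sphere and $\mathbb{F}$-limits are unique. This contradiction completes the proof.
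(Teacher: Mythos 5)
Your overall strategy coincides with the paper's: argue by contradiction, use the locally smooth convergence from the local regularity theorem to transfer a compact positive-curvature slice of the oval to the approximating flows, apply Hamilton's theorem to produce spherical singularities $\widehat x_i^+\to x$, and use the finiteness hypothesis to force $\widehat x_i^+=x$ for large $i$. Your Step 2 (the $W_1$-convergence via Kolmogorov consistency and concentration of conjugate heat kernels) supplies detail the paper leaves implicit, and it is the right mechanism. One caveat in Step 1: in the normalization the paper adopts, the blowup limit becomes extinct \emph{at} time zero, so the time-zero slice $\mathcal{X}^\infty_0$ is degenerate and $T_{\mathrm{ext}}>0$ is not available; you should work with the time $-1$ slice (as the paper does), after which $t_i^+\to 0$ rather than to a positive limit. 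This is harmless for Steps 1--2 but it removes the geometric picture your Step 3 relies on.

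The genuine gap is in Step 3. Your contradiction requires identifying the blowup limit of $(\mathcal{X}^{x_i,\lambda_i},\nu^{x_i,\lambda_i})$, and you derive it from the tangent flow at $x$ by ``recentering at $x_i$''. In the metric-flow setting this recentering is precisely a change of base-point, which is only justified by Bamler's change of base-point theorem \cite[Theorem 6.40]{Bam2}, whose hypothesis is that the two sequences of conjugate heat kernel measures $\nu^{x_i,\lambda_i}_s$ and $\nu^{x,\lambda_i}_s$ converge to the same limit within the correspondence; the remark that ``$x_i$ cannot escape to infinity, the limit being compact'' does not establish this, and with the correct normalization the sphere containing $x_i$ has rescaled radius $\sqrt{t_i^+}\to 0$, so the ``compact slice at time $-T_{\mathrm{ext}}<0$'' picture is not there. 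The paper sidesteps all of this by applying the change of base-point theorem once, in the opposite direction (from $x_i$ to $y_i=x$), to conclude that $(\mathcal{X}^{x,\lambda_i},(\nu^{x,\lambda_i}_s)_{s\le 0})$ itself $\mathbb{F}$-converges to an ancient oval; since this is a rescaling about a fixed center, the limit is a tangent flow, hence selfsimilar by Perelman's monotonicity, whereas the oval is not. Once you know $\widehat x_i^+=x$ you already have everything needed for this cleaner contradiction, so I would replace your Step 3 by it (or, at minimum, invoke \cite[Theorem 6.40]{Bam2} explicitly and verify its hypothesis using the $W_1$ estimate you set up in Step 2).
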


\begin{proof}
Suppose towards a contradiction that there are $x_i \to x$ and $\lambda_i\to\infty$ such that $(\mathcal{X}^{x_i,\lambda_i}, (\nu^{x_i,\lambda_i}_s)_{s\le 0})$  has as $\mathbb{F}$-limit an ancient oval, say $(\mathcal{X}^\infty , (\nu ^\infty_{x_{\max};s})_{s\le 0}) $, that becomes extinct at time zero. Since the time $-1$ slices converge smoothly by the local regularity theorem \cite[Theorem {2.29}]{Bam3}, and since the ancient ovals are compact with positive curvature, it follows that the flow $\mathcal{X}$ has a spherical singularity at some nearby space-time point $y_i \in \mathcal{X}$ with $y_i\to x$. Now, using the change of base-point theorem \cite[Theorem 6.40]{Bam2} we see that $(\mathcal{X}^{y_i,\lambda_i},(\nu^{y_i,\lambda_i}_s)_{s\le 0})$ still $\mathbb{F}$-converges to an ancient oval. However, since there are only finitely many spherical singularities, we infer that $y_i=x$ for large $i$, and passing to a subsequential limit we obtain a tangent flow at $x$, which is selfsimilar. This is a contradiction, and thus proves the proposition.
 \end{proof}

\bigskip 
 
We can thus conclude that our main theorem holds true:
 
\begin{proof}[Proof of Theorem \ref{thm_equivalence}] Fix $x\in \mathcal{X}$. Suppose first that there is an accumulation of spherical singularities at $x$. Note that $x$ must be a singular point by the local regularity theorem \cite[Theorem {2.29}]{Bam3}. {Recall that since we are working in dimension 3, as a consequence of \cite[Theorem {2.44}]{Bam3} and \cite{Per2} the only nontrivial metric solitons are the round shrinking $S^3$ and the round shrinking $\mathbb{R}\times S^2$, as well as finite quotients thereof.} {Note that spherical space forms cannot occur as tangent flow at $x$, since round singularities are isolated. Indeed, as a consequence of Hamilton's classical theorem \cite{Ham}, at any spherical singularity $x$ the tangent flow is unique and becomes extinct in a unique singular point. Hence, if there was a sequence $x_i$ of singular points converging to $x$ in the natural topology, then for $i$ large enough we would have $x_i=x$.} Thus, any tangent flow at $x$ must be a round shrinking cylinder or one of its $\mathbb{Z}_2$-quotients, i.e. the flow $\mathcal{X}$ has a neck or quotient neck singularity at $x$.  Hence, by Proposition \ref{prop1}, Perelman's ancient ovals occur as blowup limit at $x$. Conversely, if there is no accumulation of spherical singularities at $x$, then, by Proposition \ref{prop2}, Perelman's ancient ovals cannot occur as blowup limit at $x$. This proves the theorem.
\end{proof}

\bigskip

\bibliographystyle{amsplain}

\vspace{10mm}

{\sc Beomjun Choi, Department of Mathematics, POSTECH, 77 Cheongam-Ro, Nam-Gu, Pohang, Gyeongbuk, Korea 37673}\\

{\sc Robert Haslhofer, Department of Mathematics, University of Toronto,  40 St George Street, Toronto, ON M5S 2E4, Canada}\\

\end{document}